\newcommand{\eg}{{\it e.g., }}
\newcommand{\ie}{{\it i.e., }}
\newcommand{\E}[1]{\mathbf{E}[#1]}
\newcommand{\Estar}[1]{\mathbf{E}^{\text{lin}}[#1]}
\newtheorem{thm}{Theorem}
\newtheorem{rem}{Remark}
\newtheorem{prop}{Proposition}
\newtheorem{lem}{Lemma}
\newtheorem{cor}{Corollary}
\title{\LARGE \bf
Differentially Private State Estimation \\ in Distribution Networks with Smart Meters
}
\author{Henrik Sandberg, Gy\"{o}rgy D\'{a}n, and Ragnar Thobaben
\thanks{This work was supported in part by the Swedish Research Council (grant~2013-5523), the Swedish Foundation for Strategic Research through the ICT-Psi project, the EU-project SPARKS, and by the ACCESS Linnaeus Centre.}
\thanks{The authors are with the ACCESS Linnaeus Centre, School of Electrical Engineering, KTH Royal Institute of Technology, Stockholm, Sweden. Email: {\tt \{hsan,gyuri,ragnart\}@kth.se} }
}
\begin{document}
\maketitle
\thispagestyle{empty}
\pagestyle{empty}

\begin{abstract}
State estimation is routinely being performed in high-voltage power transmission grids in order to assist in operation and to detect faulty equipment. In low- and medium-voltage power distribution grids, on the other hand, few real-time measurements are traditionally available, and operation is often conducted based on predicted and historical data. Today, in many parts of the world, smart meters have been deployed at many customers, and their measurements could in principle be shared with the operators in real time to enable improved state estimation. However, customers may feel reluctance in doing so due to privacy concerns. We therefore propose state estimation schemes for a distribution grid model, which ensure differential privacy to the customers. In particular, the state estimation schemes optimize different performance criteria, and a trade-off between a lower bound on the estimation performance versus the customers' differential privacy is derived. The proposed framework is general enough to be applicable also to other distribution networks, such as water and gas networks.
\end{abstract}

\section{Introduction}
State estimation has long been an essential component of electric power system monitoring and control systems~\cite{Abur04}. In transmission systems it is typically based on near-real time measurements of voltages, power flows, and more recently, voltage phasors. In low- and medium-voltage distribution systems it is, however, often based on predicted loads in lack of a dedicated communication infrastructure and measurement devices~\cite{Kersting2006Book,AbdelMajeed+12}. Due to the proliferation of volatile distributed renewable generation (\eg residential PV systems), predicted loads will soon no longer be sufficient to achieve a good enough state estimate that would allow efficient and safe operation of the distribution grid, \eg voltage conservation and VAR~control (VVC), or fault location, isolation and restoration (FLIR), and real-time measurements will therefore become necessary.

While a dedicated real-time monitoring infrastructure for distribution grids may be too expensive,
automated and smart meters, which are being deployed world-wide for billing purposes, could be used for providing the operators frequent measurements of the loads, thereby enabling dynamic state estimation. Nonetheless, collecting frequent measurement data about residential or industrial customers' loads can be used to invade the customers' privacy in a variety of ways~\cite{Molina-Markham:2010}.

To protect customers' privacy but at the same time allow frequent smart meter measurements for system operation, a number of privacy-preserving aggregation schemes have been proposed recently~\cite{Bohli:2010,Li:2011,acscastelluccia}.
Aggregation can be done via a trusted third party~\cite{Bohli:2010} or by using cryptographic solutions such as homomorphic encryption~\cite{Li:2011}, but these solutions increase system complexity significantly.
An alternative aggregation scheme that does not increase system complexity relies on adding random noise, typically Gaussian or Laplacian,
to the measurement data submitted by the customers. Combined with careful protocol design, aggregation with random noise
has been shown to provide differential privacy~\cite{acscastelluccia}, a
probabilistic notion of privacy that is widely used in statistical databases~\cite{dwork+06,dwork+10} and has recently found application in filtering and control~\cite{nypappas,Huang+14}.
While adding noise may help preserve privacy, a fundamental question is how it would affect the quality of state estimation,
and ultimately the power system applications that rely on it.

In this paper, we address this question by providing a characterization of the trade-off between differential privacy
and the mean distortion of the state estimate in a single feeder of a
distribution network based on load measurements from smart meters.
We characterize the $\epsilon$- and the $(\epsilon,\delta)$-differential privacy under Laplacian and Gaussian
additive random noise, respectively. We express the maximum a~posteriori state estimate as the solution
of a convex programming problem, and provide a closed-form expression for the linear minimum mean square error estimate.
We show that a customer by adding random noise may only have to give up a little bit extra of privacy, and can still dramatically improve the
state estimation performance, which could enable optimizing financial incentives
for trading estimation performance and customer privacy.

The trade-off between privacy and estimation quality has been considered for inter-area state estimation in~\cite{Sankar2011sgc}, where the trade-off between the mutual information and the expected distortion was investigated in an information-theoretic framework. The authors in~\cite{Rajagopalan2013tsg} studied the trade-off between the distortion of the estimate of a random variable modeling an electric load and the information leakage quantified by the mutual information.
Instead of mutual information, in this paper we use differential privacy, which allows
us to evaluate privacy when different users' data are non-stationary, and even without a statistical model of the data.
In~\cite{nypappas}, the authors study the differential privacy of the inputs to a dynamical system when
releasing the outputs to a potential adversary, and design a differentially private Kalman filter
that protects the input variables. In this paper, the setup is somewhat similar but we do not consider
a dynamical system and we do analyze the case when the filter has access to side information, beyond the control of the inputs.
In~\cite{Huang+14}, the authors evaluate the cost of differential privacy in a system of agents that try to estimate their environment in a distributed manner. In our work, the estimation is done by a central entity instead of the agents themselves.

The rest of the paper is organized as follows. Section~\ref{sec:model} describes our model and the problem formulation. Section~\ref{sec:diffpriv} defines differential privacy in the context of state estimation. Section~\ref{sec:statest} provides optimal state estimation algorithms under various assumptions and bounds on their accuracy. Section~\ref{sec:tradeoff} analyzes the trade-off between privacy and estimation error, and Section~\ref{sec:conclusion} concludes the paper.

\section{Modeling and Problem Formulation}
\label{sec:model}
Operators of low- and medium-voltage power distribution networks today typically only have access to a relatively small number of  real-time measurements \cite{Kersting2006Book,AbdelMajeed+12}. It is then not possible to run standard state estimation algorithms as is commonly used in high-voltage transmission grids \cite{Abur04}. In this work, we therefore analyze how low-level measurements of the customers' loads can be integrated in state estimation schemes, while ensuring the customers' privacy.

We consider a single distribution line that is connected to a substation under the control of an operator, see Fig.~\ref{fig:powerline}. Since distribution grids are typically radial, this is not a severe restriction.
There are $N$ service drops or laterals along the line;
at each service drop location $j$ along the line, there are several loads connected in parallel, which draw a total load
current $L_j$. We will here investigate the ``value'' of a measurement of $L_j$ to the operator.
Such a measurement could be realized in a modern distribution power grid by using individual customers' smart meters.
For example, the individual customers at location $j$ could aggregate their measurements and send them
directly to the operator, in
real time. However, for privacy reasons, it may not be desirable for an individual customer
to share its instantaneous load. We denote such a customer by $C$ in Fig.~\ref{fig:powerline}, and will next investigate how the customer can contribute with measurements and still maintain a quantifiable amount of privacy.
\begin{rem}
It is not necessary to have a trusted agent at location $j$ to perform the aggregation of the measured load currents, to form
the measurement of $L_j$.
In~\cite{acscastelluccia}, it is shown how such computations can be securely decentralized among the customers.
\end{rem}

\begin{figure}[tb]
\centering
\includegraphics[width=0.75\hsize]{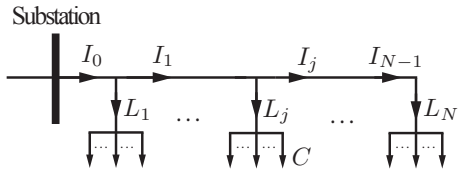}
\caption{The considered distribution line. The grid operator can only directly measure the current $I_0$ at the substation.
We analyze how the operator can benefit in the state estimation from communicated measurements of the loads $L_1,L_2,\ldots,L_N$, while maintaining individual customers' privacy (customer denoted by $C$).}
\label{fig:powerline}
\end{figure}

\subsubsection{Modeling}
In what we call the \emph{Base Scenario},
the operator has only access to a real-time measurement $Z_0$ of the total current flow $I_0$ that is being supplied from
the substation,
\begin{equation}
Z_0 = I_0 + W_0,
\label{eq:flow0}
\end{equation}
where $W_0 \sim \mathcal{N}(0,R_0)$ is zero-mean Gaussian measurement noise, of variance $R_0 \equiv \sigma_0^2$ ($\sigma_0$ is the standard deviation). In the Base Scenario, we also assume the operator has access to a statistical model of the loads $L_1,L_2,\ldots, L_N$ along the line. In the power systems literature,
sometimes such data are referred to as \emph{pseudo measurements}.
The load vector $L$ is assumed to have a multivariate Gaussian distribution, $L \sim \mathcal{N}(m,P)$, with mean and covariance denoted by
\begin{align*}
m & = \begin{pmatrix} m_1 \\ \vdots \\ m_N \end{pmatrix} := \E L  = \begin{pmatrix} \E {L_1} \\ \vdots \\ \E {L_N} \end{pmatrix}  \\
P & = \begin{pmatrix}  P_{11} & \ldots &  P_{1N} \\ \vdots & \ddots & \vdots  \\ P_{N1} & \ldots & P_{NN}\end{pmatrix} := \E {(L-m)(L-m)^T}.
\end{align*}
The Gaussian distribution of the loads can be justified if we assume each load $L_j$ is an aggregate of a multitude of individual customers as indicated already in Fig.~\ref{fig:powerline}. The operator could estimate this statistical model based on
historical load data. The following notation will also be used,
\begin{equation*}
\begin{pmatrix}  P_{1} \\\vdots \\ P_{N} \end{pmatrix} := \begin{pmatrix}  P_{11}+\ldots + P_{1N} \\\vdots \\ P_{N1}+\ldots + P_{NN} \end{pmatrix}  = P\mathbf{1},
\end{equation*}
where $\mathbf{1}$ is column vector of ones.

In this paper, the sole physical model used is based on the \emph{conservation of currents}, \ie
\begin{equation}
I_j = \sum_{k>j} L_k, \quad j=0,\,1,\ldots,N,
\label{eq:flow}
\end{equation}
where $I_j$ are the (unknown) currents along the line. It is clear that the operator may
benefit from a more detailed physical model in the state estimation to follow, but we leave this option for future work.
\begin{rem} Note that flow conservation models similar to \eqref{eq:flow} apply to other distribution networks, such as gas and water networks, where mass is a conserved quantity. Hence, the results that follow are applicable also in these scenarios.
\end{rem}

Based on the model \eqref{eq:flow}, it follows that the total current $I_0$ has a Gaussian distribution, $I_0 \sim \mathcal{N}(m_0,P_0+R_0)$, where
\begin{equation*}
m_0 := m_1 + m_2 + \ldots + m_N, \quad P_0 := \mathbf{1}^T P \mathbf{1}.
\end{equation*}

In what we call the \emph{Smart Meter Scenario}, the operator is also provided real-time measurements $Z_j$ of the loads $L_1,L_2,\ldots,L_N$,
\begin{equation}
Z_j = L_j + W_j, \quad j=1,2,\ldots,N,
\label{eq:smartmeas}
\end{equation}
where $W_j \sim \text{Lap}(b_j)$ are independent zero-mean random noise variables with Laplacian distributions of variance $R_j=2b_j^2$.
(The probability density function (PDF) of a zero-mean Laplacian random variable $W$ is $p_W(w)=\frac{1}{2b}e^{-|w|/b}$.)
The reason for the Laplacian noise assumption will become clear in Section~\ref{sec:diffpriv}.

\subsubsection{Problem Formulation}
Based on the measurement $Z_0$ and the statistical model of $L$, it is possible for the operator in the Base Scenario to estimate the loads $L_1,L_2,\ldots,L_{N}$ and currents $I_1,I_2,\ldots,I_{N-1}$ along the line, see Section~\ref{sec:statest}. With such estimates, one can monitor the status of the line and detect faults in real time.
However, when the load uncertainty is large (the covariance $P$ is large), the quality of this estimate is low.

The main problem considered in this paper is how to integrate the load measurements $Z_1,Z_2,\ldots,Z_N$ in the state estimation in the Smart Meter Scenario, and to quantify the improvement of the estimate and the \emph{loss of privacy} that the customers experience.

\section{Differential Privacy for the Distribution Line Loads}
\label{sec:diffpriv}
In this section, the goal is to quantify the loss of privacy that the customers along the distribution line
experience through the introduction of the load measurements $Z_0,Z_1,\ldots,Z_N$.

\subsection{Differential Privacy}
To quantify the level of privacy enjoyed by the customers along the line, we employ the notion of
\emph{differential privacy}, see~\cite{dwork+06,dwork+10,nypappas,Huang+14}, for example.
We first need to introduce some concepts:
Suppose that data from $m$ customers are stored in a vector $d\in\mathbb{R}^m$.
We call the vector $d$ a \emph{data vector}. We say two data vectors $d,d'\in\mathbb{R}^m$
are \emph{adjacent}, $\text{Adj}(d,d')$, if and only if they differ in one entry:
\begin{align*}
\text{for some } k,\,d_{k}\neq d'_{k}, \text{ and } d_{l}=d'_{l} \text{ for all } l\neq k.
\end{align*}
A concrete example is that one customer turns off his smart meter and does not participate in the data collection, transforming $d$ into $d'$. We also note that Adj$(\cdot,\cdot)$ is a symmetric binary relation.

 We say a measurement $M(\cdot,W):\mathbb{R}^m\to \mathbb{R}$, where $W$ is a
 random variable, is $(\epsilon,\delta)$-\emph{differentially private} if for all
  adjacent data vectors $d,d'\in\mathbb{R}^m$, and possible events
  $E \subseteq \text{Supp}(M(d,W))\cup \text{Supp}(M(d',W))$, it holds
\begin{equation}
\text{Pr}[M(d,W)\in E] \leq e^\epsilon \, \text{Pr}[M(d',W)\in E]+\delta.
\label{eq:diffpriv}
\end{equation}
If $\epsilon \geq 0$ and $\delta \geq 0$ are small, this means that the changes in the statistics of the output of $M$ are very small when applied to any adjacent data vectors. Put differently, with access to only the output of $M$, it is practically impossible to decide whether a single customer has participated in the measurement, or not, and $(\epsilon,\delta)$ provides a quantitative certificate of privacy to the customer.
If \eqref{eq:diffpriv} holds with $\delta=0$, we say $M$ is $\epsilon$-\emph{differentially private}.
More details on the interpretation of differential privacy can be found in \cite{dwork+06,dwork+10,nypappas,Huang+14}, for example.

To design $(\epsilon,\delta)$-differentially private measurements $M$, we use Theorems~2 and 3 in \cite{nypappas}. First, we
suppose the measurements can be written in the general form
\begin{equation*}
M(d,W) = q(d) + W,
\end{equation*}
where $q:\mathbb{R}^m \to \mathbb{R}$ is a deterministic \emph{query} that depends on the data vector only.
Next, define the \emph{sensitivity} of a query $q$ by
\begin{equation*}
S(q) := \sup_{d,d':\,\text{Adj}(d,d')} |q(d)-q(d')|.
\end{equation*}
That is, $S(q)$ measures the worst-case change in output of $q(d)$ over adjacent data vectors $d$.
The following proposition then holds.
\begin{prop}[\cite{nypappas}]
The measurement $M(d,W) = q(d) + W$ is:
\begin{itemize}
\item[(a)] $\epsilon$-differentially private if the random variable $W$ has a Laplace distribution, $W \sim \text{Lap}(b)$, with zero mean and scale parameter
$b \geq \frac{S (q)}{\epsilon}$;
\item[(b)] $(\epsilon,\delta)$-differentially private if the random variable $W$ has a Gaussian distribution, $W \sim \mathcal{N}(0,\sigma^2)$, with zero mean and standard deviation
$\sigma \geq \frac{S (q)}{2\epsilon}(K + \sqrt{K^2+2\epsilon})$,
where $K:=K(\delta)=Q^{-1}(\delta)$ and $Q(x):=\frac{1}{\sqrt{2\pi}}\int_x^\infty e^{-u^2/2}du$.
\end{itemize}
\label{prop:LapGauss}
\end{prop}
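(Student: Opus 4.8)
\noindent\emph{Proof plan.}\quad The plan is to bound the likelihood ratio of the two output densities pointwise and then integrate over the event $E$; this settles part~(a) directly, while part~(b) needs in addition a ``good set/bad set'' truncation to absorb the unbounded tail of the Gaussian ratio into $\delta$. Note first that both $\mathrm{Lap}(b)$ and $\mathcal N(0,\sigma^2)$ have strictly positive densities on all of $\mathbb R$, so for every data vector $d$ the output $M(d,W)=q(d)+W$ has a density $p_d(\cdot)$ on $\mathbb R$ and $\mathrm{Supp}(M(d,W))\cup\mathrm{Supp}(M(d',W))=\mathbb R$; thus $E$ may be taken to be an arbitrary measurable subset of $\mathbb R$. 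Fix adjacent $d,d'$ and put $\Delta:=q(d)-q(d')$, so that $|\Delta|\le S(q)$ by definition of the sensitivity.

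For part~(a), I would compute directly
\[
\frac{p_d(x)}{p_{d'}(x)}=\exp\!\Big(\tfrac1b\big(|x-q(d')|-|x-q(d)|\big)\Big)\le\exp\!\Big(\tfrac{|\Delta|}{b}\Big)\le e^{S(q)/b}\le e^{\epsilon},
\]
where the first inequality is the reverse triangle inequality $\bigl||a|-|b|\bigr|\le|a-b|$ and the last uses the hypothesis $b\ge S(q)/\epsilon$. Integrating this pointwise bound over any measurable $E$ gives $\Pr[M(d,W)\in E]\le e^{\epsilon}\Pr[M(d',W)\in E]$, i.e.\ $\epsilon$-differential privacy ($\delta=0$).

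For part~(b) the ratio is unbounded, so I would introduce the privacy-loss variable: writing $x=q(d)+W$ with $W\sim\mathcal N(0,\sigma^2)$, a short computation gives
\[
L:=\ln\frac{p_d(x)}{p_{d'}(x)}=\frac{W\Delta}{\sigma^2}+\frac{\Delta^2}{2\sigma^2}\ \sim\ \mathcal N\!\Big(\tfrac{\Delta^2}{2\sigma^2},\,\tfrac{\Delta^2}{\sigma^2}\Big).
\]
Set $B:=\{x:L(x)>\epsilon\}$. For any event $E$, splitting into $E\cap B$ and $E\setminus B$ and using $p_d\le e^{\epsilon}p_{d'}$ off $B$ gives
\[
\Pr[M(d,W)\in E]\le\Pr[M(d,W)\in B]+e^{\epsilon}\Pr[M(d',W)\in E],
\]
so it remains only to force $\Pr[M(d,W)\in B]=\Pr[L>\epsilon]\le\delta$. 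From the Gaussian law of $L$,
\[
\Pr[L>\epsilon]=Q\!\Big(\frac{\epsilon\sigma}{|\Delta|}-\frac{|\Delta|}{2\sigma}\Big),
\]
and since $Q$ is decreasing and its argument is decreasing in $|\Delta|$, it is enough to verify this for the worst case $|\Delta|=S(q)$. Requiring the argument to be at least $K=Q^{-1}(\delta)$ then reduces to the quadratic inequality $t^2+2Kt-2\epsilon\le0$ in $t:=S(q)/\sigma$, whose relevant solution $t\le\sqrt{K^2+2\epsilon}-K$, after rationalizing $\bigl(\sqrt{K^2+2\epsilon}-K\bigr)^{-1}=\bigl(K+\sqrt{K^2+2\epsilon}\bigr)/(2\epsilon)$, is precisely the stated bound $\sigma\ge\frac{S(q)}{2\epsilon}\bigl(K+\sqrt{K^2+2\epsilon}\bigr)$.

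I expect part~(a) to be routine, with the real work concentrated in part~(b): getting the distribution of the privacy-loss variable $L$ right, carrying out the good-set/bad-set split cleanly (in particular that $\{M(d,W)\in B\}$ has probability exactly $\Pr[L>\epsilon]$, and that replacing $\Pr[M(d',W)\in E\setminus B]$ by $\Pr[M(d',W)\in E]$ only weakens the bound), and finally massaging the tail condition $Q(\cdot)\le\delta$ into closed form — the identity $\bigl(\sqrt{K^2+2\epsilon}-K\bigr)^{-1}=\bigl(K+\sqrt{K^2+2\epsilon}\bigr)/(2\epsilon)$, together with the monotonicity argument that reduces $|\Delta|$ to its worst value $S(q)$, is the one genuinely non-mechanical step.
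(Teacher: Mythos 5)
Your proof is correct. Note, however, that the paper offers no proof of this proposition at all --- it is imported verbatim from Theorems~2 and~3 of the cited reference (Le~Ny and Pappas), so there is no in-paper argument to compare against. Your derivation (pointwise likelihood-ratio bound via the reverse triangle inequality for the Laplace case; privacy-loss random variable, good-set/bad-set truncation, and the quadratic in $S(q)/\sigma$ for the Gaussian case) is exactly the standard proof given in that reference, including the rationalization that turns $t\le\sqrt{K^2+2\epsilon}-K$ into the stated form of the $\sigma$ bound, so nothing further is needed.
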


Hence, by either adding Laplacian noise or Gaussian noise to a query we obtain differential privacy.
Note that Laplacian noise yields better privacy ($\delta=0$) because its PDF has fatter tails. Thus
if customers can choose to add noise, Laplacian noise is the natural selection and justifies the model~\eqref{eq:smartmeas}.
The noise in \eqref{eq:flow0} is modeled as being Gaussian, because we assume it arises in the physical meter
owned by the operator. There is of course also similar physical noise in the customers' smart meters, but for simplicity we assume this is negligible.
In any case, introducing more noise sources in the customers' measurements would only increase their privacy.

\subsection{Privacy in the Base Scenario}
By providing measurements of the loads $L_j$, it is clear that the customers will loose some privacy.
However, the operator can already measure the total current $I_0$ by means of \eqref{eq:flow0}, beyond the control of the customers. We should first therefore evaluate the loss of privacy through this measurement. Because $Z_0$ is subjected to Gaussian noise, we use Proposition~\ref{prop:LapGauss}-(b).

The query in this case is the total current, $q_0 = l_1 + l_2 +\ldots + l_N$, where $l_1,l_2,\ldots,l_N$ are realizations of the random variables $L_1,L_2,\ldots,L_N$. An adjacent load pattern is obtained if
a single customer $C$ at an arbitrary location $j$ changes his or her load, so that the load goes from $l_j$ to $l_j'$. If we assume
a (uniform) bound on the difference between the maximum and minimum load current of a single customer is $\Delta$, it easily follows that the sensitivity of $q_0$ is
\begin{equation*}
S (q_0) = \Delta.
\end{equation*}
The differential privacy in the Base Scenario is stated next.
\begin{lem}
\label{lem:eps0}
The measurement $Z_0$ gives $(\epsilon_0,\delta_0)$-differential privacy to the customers, where
\begin{equation*}
\epsilon_0 = \frac{\Delta K}{\sigma_0} + O\left(\frac{\epsilon_0}{K}\right), \quad \epsilon_0 \rightarrow 0,
\end{equation*}
and $K=K(\delta_0)=Q^{-1}(\delta_0)$.
\end{lem}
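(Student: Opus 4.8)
The plan is to apply Proposition~\ref{prop:LapGauss}-(b) to the measurement $Z_0 = I_0 + W_0$, written in the query form $M(d,W_0) = q_0(d) + W_0$ with $q_0$ the total-current query $q_0(l_1,\ldots,l_N) = l_1+\cdots+l_N$, Gaussian noise $W_0\sim\mathcal{N}(0,\sigma_0^2)$, and sensitivity $S(q_0)=\Delta$ as already computed. The proposition certifies $(\epsilon_0,\delta_0)$-differential privacy whenever
\begin{equation*}
\sigma_0 \;\geq\; \frac{\Delta}{2\epsilon_0}\bigl(K + \sqrt{K^2+2\epsilon_0}\bigr), \qquad K = Q^{-1}(\delta_0).
\end{equation*}
Since a smaller $\epsilon_0$ is a stronger privacy statement, the task for fixed $\sigma_0$ and $\delta_0$ is to identify the \emph{smallest} $\epsilon_0$ for which this inequality holds.

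First I would observe that the right-hand side, viewed as a function $f(\epsilon_0)$, is continuous and strictly decreasing on $(0,\infty)$ with range $(0,\infty)$; a short derivative computation confirms $f'<0$. Hence the inequality holds precisely for $\epsilon_0$ at least as large as the unique root of the associated equality, so the optimal (smallest) admissible value is that root. It then remains to solve
\begin{equation*}
\sigma_0 = \frac{\Delta}{2\epsilon_0}\bigl(K + \sqrt{K^2+2\epsilon_0}\bigr)
\end{equation*}
for $\epsilon_0$. I would isolate the radical as $\sqrt{K^2+2\epsilon_0} = \frac{2\sigma_0\epsilon_0}{\Delta} - K$ — whose right-hand side is nonnegative, since $K+\sqrt{K^2+2\epsilon_0}\geq 2K$ forces $\frac{2\sigma_0\epsilon_0}{\Delta}\geq 2K$ — square both sides, note that the $K^2$ terms cancel and one factor of $\epsilon_0>0$ divides out, and thereby arrive at the \emph{exact} closed form
\begin{equation*}
\epsilon_0 = \frac{\Delta K}{\sigma_0} + \frac{\Delta^2}{2\sigma_0^2}.
\end{equation*}

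Finally I would translate this into the stated asymptotic form. As $\sigma_0\to\infty$ we have $\epsilon_0\to 0$ with $\epsilon_0 = \frac{\Delta K}{\sigma_0}\bigl(1+o(1)\bigr)$, hence $\Delta/\sigma_0 = \frac{\epsilon_0}{K}\bigl(1+o(1)\bigr)$, so the correction term satisfies $\frac{\Delta^2}{2\sigma_0^2} = \tfrac12(\Delta/\sigma_0)^2 = O\!\bigl((\epsilon_0/K)^2\bigr) = O(\epsilon_0/K)$ in this regime, which yields $\epsilon_0 = \frac{\Delta K}{\sigma_0} + O(\epsilon_0/K)$ as $\epsilon_0\to 0$.

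I do not anticipate a serious obstacle; the two places needing a little care are (i) justifying that it is equality in the Proposition~\ref{prop:LapGauss}-(b) bound, rather than some interior point, that gives the smallest admissible $\epsilon_0$, which is exactly the monotonicity of $f$, and (ii) confirming that squaring the isolated-radical equation introduces no spurious root, which follows from the nonnegativity check above (indeed the resulting solution gives $\frac{2\sigma_0\epsilon_0}{\Delta} = 2K + \frac{\Delta}{\sigma_0} > K$). The remaining algebra is elementary because, after squaring, the equation for $\epsilon_0$ is linear.
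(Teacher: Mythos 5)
Your proposal is correct and starts from the same place as the paper's one-line proof (inverting the standard-deviation formula of Proposition~\ref{prop:LapGauss}-(b) for fixed $\sigma_0,\delta_0$), but where the paper merely Taylor-expands, you observe that the equality case can be solved \emph{exactly}: after isolating the radical and squaring, the $K^2$ and the quadratic-in-$\epsilon_0$ structure collapse to a linear equation, giving $\epsilon_0 = \frac{\Delta K}{\sigma_0} + \frac{\Delta^2}{2\sigma_0^2}$. Your monotonicity argument (to justify that the smallest admissible $\epsilon_0$ is the root of the equality) and the nonnegativity check before squaring are both sound, granted $K=Q^{-1}(\delta_0)>0$, i.e.\ $\delta_0<1/2$, which is the regime the paper works in. The exact formula buys a slightly stronger conclusion than the lemma states: the remainder is $\tfrac12(\Delta/\sigma_0)^2 = O\bigl((\epsilon_0/K)^2\bigr)$, which is $o(\epsilon_0/K)$ as $\epsilon_0\to 0$, so the paper's $O(\epsilon_0/K)$ bound is valid but not tight; your derivation also makes precise the paper's subsequent remark that the correction is negligible exactly when $\Delta/\sigma_0 \ll K$.
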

\begin{proof}
Follows by a Taylor expansion of the standard deviation formula in Proposition~\ref{prop:LapGauss}-(b).
\end{proof}

A typical choice of $\delta_0$ is $0.05$, which gives $K\approx 1.64$, or $\delta_0 = 0.01$, which gives $K\approx 2.33$. We note that the $O(\cdot)$-term in Lemma~\ref{lem:eps0} can be neglected if the customer size $\Delta$ is small as compared to $K$, and then $\epsilon_0 \approx \Delta K/\sigma_0$.

\begin{rem}
Note that differential privacy is independent of the underlying probability distribution of the load pattern $L$ (the data). Hence, no matter if it is really Gaussian, or not, the level of privacy is guaranteed. The statistical model of the data is needed for reliable state estimation, however.
\end{rem}

\subsection{Privacy in the Smart Meter Scenario}
In the Smart Meter Scenario,
the customers provide the measurements $Z_j$, $j=1,2,\ldots,N$ to the operator,
which causes a loss of privacy compared to the Base Scenario.
In this case, the queries are of the type $q_j=l_j$, where $l_j$ is a realization of $L_j$.
Under the same assumptions as in the previous subsection, we have that
\begin{equation*}
S (q_j) = \Delta, \quad j=1,2,\ldots,N,
\end{equation*}
where $\Delta$ is the uniform bound on a single customer's maximum load current change.
\begin{rem}
Note that the measurement $Z_j$ sent to the operator is the sum (aggregate) of all the customers' load currents at location $j$, plus the chosen Laplacian noise. We assume that $\Delta$ is a bound that works for every single customer at location~$j$.
\end{rem}

By Proposition~\ref{prop:LapGauss}-(a), it is clear that the measurement $Z_j$ gives
$\epsilon$-differential privacy to every single customer, where $\epsilon = \Delta/b_j$. Hence, by choosing the scaling parameter $b_j$ for the Laplacian noise properly, the customers can tune the level of privacy. However, there is also a loss of privacy through $Z_0$, and for the composite measurement $(Z_0,Z_j)$ we have the following result.
\begin{thm}
\label{thm:compositeprivacy}
The composite measurement $(Z_0,Z_j)$ gives a customer at location $j$ a $(\epsilon_0+\epsilon, \delta_0e^{\epsilon})$-differential privacy, when $Z_0$ is
$(\epsilon_0,\delta_0)$-differentially private and $Z_j$ is $\epsilon$-differentially private.
\end{thm}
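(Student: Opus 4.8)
The plan is to work directly with the joint probability law of $(Z_0,Z_j)$ and peel off the two privacy guarantees one after the other, with the key structural input being that the additive noises $W_0\sim\mathcal{N}(0,R_0)$ in~\eqref{eq:flow0} and $W_j\sim\text{Lap}(b_j)$ in~\eqref{eq:smartmeas} are independent. Consequently, for any fixed load (data) vector $d$ the law of $(Z_0(d),Z_j(d))$ has the product density $p_0(x\mid d)\,p_j(y\mid d)$, where $p_0(\cdot\mid d)$ is the $\mathcal{N}(q_0(d),R_0)$ density and $p_j(\cdot\mid d)$ the Laplace density centred at $q_j(d)$; both have full support, so it suffices to verify~\eqref{eq:diffpriv} for an arbitrary $E\subseteq\mathbb{R}^2$.

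First I would fix adjacent $d,d'$ and an event $E\subseteq\mathbb{R}^2$, and for each $y$ write $E_y:=\{x:(x,y)\in E\}$ for the corresponding section. By Tonelli's theorem,
\[
\text{Pr}[(Z_0(d),Z_j(d))\in E]\;=\;\int_E p_0(x\mid d)\,p_j(y\mid d)\,dx\,dy\;=\;\int p_j(y\mid d)\,\text{Pr}[Z_0(d)\in E_y]\,dy .
\]
Since $Z_j$ is $\epsilon$-differentially private and its density is the explicit Laplace density, the pointwise likelihood-ratio bound $p_j(y\mid d)\le e^{\epsilon}\,p_j(y\mid d')$ holds for every $y$ (this is precisely the estimate underlying Proposition~\ref{prop:LapGauss}-(a)), while the $(\epsilon_0,\delta_0)$-differential privacy of $Z_0$ applied to the section event $E_y$ gives $\text{Pr}[Z_0(d)\in E_y]\le e^{\epsilon_0}\text{Pr}[Z_0(d')\in E_y]+\delta_0$. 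Substituting both bounds,
\[
\text{Pr}[(Z_0(d),Z_j(d))\in E]\;\le\; e^{\epsilon}\!\int p_j(y\mid d')\bigl(e^{\epsilon_0}\text{Pr}[Z_0(d')\in E_y]+\delta_0\bigr)\,dy .
\]
Expanding the right-hand side, the first term re-assembles via Tonelli into $e^{\epsilon_0+\epsilon}\,\text{Pr}[(Z_0(d'),Z_j(d'))\in E]$, and the second term equals $\delta_0 e^{\epsilon}\int p_j(y\mid d')\,dy=\delta_0 e^{\epsilon}$ because $p_j(\cdot\mid d')$ is a probability density. This is exactly the claimed $(\epsilon_0+\epsilon,\,\delta_0 e^{\epsilon})$-bound, and since $\text{Adj}(\cdot,\cdot)$ is symmetric the same inequality holds with the roles of $d$ and $d'$ interchanged, which completes the argument.

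The measure-theoretic bookkeeping (Tonelli, sectioning of $E$) is routine; the one point that genuinely needs attention is the \emph{order} in which the two guarantees are peeled off. Peeling the pure-$\epsilon$ mechanism $Z_j$ first is what forces the additive slack $\delta_0$ of $Z_0$ to be multiplied by the already-extracted factor $e^{\epsilon}$, producing the $\delta_0 e^{\epsilon}$ in the statement; peeling $Z_0$ first would instead yield the (slightly sharper) slack $\delta_0$, so one should confirm that the weaker stated bound is the one intended. A minor secondary point is the justification of the pointwise density-ratio inequality for $Z_j$: it is immediate here because the Laplace density is explicit, but for a fully abstract $\epsilon$-differentially private $Z_j$ one would instead invoke the elementary fact that $\mu(A)\le e^{\epsilon}\nu(A)$ for all $A$ implies $\int g\,d\mu\le e^{\epsilon}\int g\,d\nu$ for every measurable $g:\mathbb{R}\to[0,1]$, applied with $g(y)=\text{Pr}[Z_0(d)\in E_y]$.
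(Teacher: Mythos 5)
Your proof is correct and delivers exactly the stated $(\epsilon_0+\epsilon,\,\delta_0 e^{\epsilon})$ bound, but it takes a more careful route than the paper and is in fact slightly stronger. The paper's proof fixes two one-dimensional events $E_0$ and $E_j$, writes the $(\epsilon_0,\delta_0)$- and $\epsilon$-differential-privacy inequalities for $Z_0$ and $Z_j$ conditioned on $L_j=l_j$ versus $l_j'$, and simply multiplies them, using independence to identify the product of probabilities with $\text{Pr}[Z_0\in E_0, Z_j\in E_j]$ and bounding the cross term $\delta_0 e^{\epsilon}\,\text{Pr}[Z_j\in E_j\mid L_j=l_j']$ by $\delta_0 e^{\epsilon}$. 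That argument only verifies the composite guarantee on product events $E=E_0\times E_j$, whereas the definition \eqref{eq:diffpriv} quantifies over arbitrary events $E$ in the joint range, and for $(\epsilon,\delta)$-privacy with $\delta>0$ the passage from rectangles to general measurable sets is not automatic (the additive slack does not propagate through countable unions the way a pure multiplicative bound does). Your Tonelli-plus-sectioning argument closes exactly this gap: slicing a general $E\subseteq\mathbb{R}^2$, applying the $(\epsilon_0,\delta_0)$ guarantee to each section $E_y$, and using the pointwise Laplace density ratio $p_j(y\mid d)\le e^{\epsilon}p_j(y\mid d')$ in the $y$-variable yields the inequality for all $E$, which is what the theorem actually asserts; the core idea (independence plus combining the two guarantees, with $\delta_0$ picking up the factor $e^{\epsilon}$) is the same as the paper's. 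Your closing observation is also accurate: if inside the integral you first apply the $(\epsilon_0,\delta_0)$ bound and integrate the constant $\delta_0$ against the probability density $p_j(\cdot\mid d)$ (contributing exactly $\delta_0$), and only afterwards replace $p_j(\cdot\mid d)$ by $e^{\epsilon}p_j(\cdot\mid d')$ in the remaining main term, one obtains the sharper $(\epsilon_0+\epsilon,\delta_0)$ guarantee; the paper's $\delta_0 e^{\epsilon}$ is a valid but slightly weaker certificate, and your proof establishes precisely the bound as stated.
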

\begin{proof}
By definition of differential privacy, we have that
\begin{align*}
\text{Pr}[Z_0\in E_0|L_j=l_j] &\leq e^{\epsilon_0} \, \text{Pr}[Z_0\in E_0|L_j=l_j']+\delta_0 \\
\text{Pr}[Z_j\in E_j|L_j=l_j] &\leq e^\epsilon \, \text{Pr}[Z_j\in E_j|L_j=l_j'],
\end{align*}
for all events $E_0$ and $E_j$ and adjacent loads $|l_j-l_j'|\leq \Delta$. Multiplying the two inequalities, and using that the measurements are independent, we obtain
\begin{multline*}
\text{Pr}[Z_0\in E_0,Z_j\in E_j|L_j=l_j] \\ \leq e^{\epsilon_0+\epsilon} \, \text{Pr}[Z_0\in E_0,Z_j\in E_j|L_j=l_j']+\delta_0 e^{\epsilon},
\end{multline*}
which yields the result.
\end{proof}

That there is not a dramatic loss of privacy under compositions of differentially private measurements is in fact one of the
main advantages of differential privacy, see \cite{dwork+10} for a more general treatment.

\section{State Estimation of the Distribution Line}
\label{sec:statest}
In this section, we present algorithms for computing (optimal) state estimates, and we bound their accuracies.
To simplify the presentation, we here only consider the estimates of an arbitrary load $L_j$, and not currents $I_j$.
Because of the linear relation \eqref{eq:flow}, it should be clear how to obtain estimates of the currents from the load estimates.

\subsection{Optimal State Estimation in the Base Scenario}
In the Base Scenario, the operator has only access to the measurement $Z_0$ and the statistical model of the loads $L$.
Since these have a multivariate Gaussian distribution, the optimal state estimate (minimum mean square error (MMSE) estimate) is
given by the conditional mean of $L_j$ \cite[Example~3.4]{andersonmoore},
\begin{equation}
\hat L_j^0 := \E{L_j|Z_0} = m_j + \frac{P_j}{P_0+R_0}(Z_0-m_0),
\label{eq:hatLj0}
\end{equation}
where $P_j = P_{j1}+\ldots + P_{jj} + \ldots + P_{jN}$.
The variance of the estimation error is
\begin{equation}
Q_j^0 := \E{(\hat L_j^0 - L_j)^2} = P_{jj} - \frac{P_j^2}{P_0 + R_0},
\label{eq:Qj0}
\end{equation}
and provides a baseline accuracy indicator for state estimation.
It is clear that if the variance of the load $L_j$ ($P_{jj}$) is small in comparison to the variance of the measurement $Z_0$,
the operator gains very little information through this measurement.

\subsection{MAP State Estimation with Smart Meters}
When the load model and the measurements all have a jointly Gaussian distribution, the MMSE estimate and the
maximum a~posteriori probability (MAP) estimate (see \cite{Saha+13}, for example) coincide, and are equal to $\E{L_j|Z_0}$. In the Smart Meter Scenario,
the random noise $W_1,W_2,\ldots,W_N$ follow a Laplacian distribution, and therefore this is no longer the case, in general. It is well known \cite[Theorem~3.1]{andersonmoore}
that the MMSE estimate is still given by the conditional mean $\hat L_j^\star := \E{L_j|Z_0,Z_1,\ldots,Z_N}$,
but a simple closed-form expression for $\hat L_j^\star$ similar to
\eqref{eq:hatLj0} is in this case not known to us. The MAP estimate also does not have a simple closed-form solution, but it is
possible to compute it online by means of a simple convex program, as we show next.

The PDF for the load vector $L$ takes the form
\begin{equation*}
p_L(l) = \frac{(2\pi)^{-N/2}}{\sqrt{\det P}} e^{- \|P^{-1/2}(l-m)\|_2^2/2},
\end{equation*}
and the conditional PDF for the measurements are
\begin{equation*}
p_{Z_0,Z|L}(z_0,z|l) = \frac{1}{\sqrt{2\pi}\sigma_0(2b)^N} e^{-(z_0-l_0)^2/(2\sigma_0^2)} e^{-\|z-l\|_1/b},
\end{equation*}
where $Z:=(Z_1,Z_2,\ldots,Z_N)^T$ and $l_0:=\mathbf{1}^T l$.
Given the received measurements $z_0$ and $z$, the MAP estimate of $L_j$ can be obtained as
\begin{equation}
\hat L^{\text{MAP}}(z_0,z) := \arg \max_l p_{\bar Z|L}(z_0,z|l)p_L(l).
\label{eq:LMAP}
\end{equation}
The following proposition states an alternative, and more explicit form of the estimate.
\begin{prop}
\label{prop:MAP}
The MAP estimate of $L$ given the measurements $z_0,z$ is
\begin{multline*}
\hat L^{\text{MAP}}(z_0,z) = \arg \min_l \frac{(z_0 - \mathbf{1}^T l )^2}{2\sigma_0^2} \\+
\frac{1}{2}\|P^{-1/2}(l-m)\|_2^2   + \frac{1}{b}\|z-l\|_1.
\end{multline*}
\end{prop}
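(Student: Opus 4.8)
The plan is to reduce the maximization in \eqref{eq:LMAP} to the stated minimization by taking a negative logarithm. By Bayes' rule, the posterior density of $L$ given the measurements $(z_0,z)$ is proportional to $p_{\bar Z|L}(z_0,z|l)\,p_L(l)$, the normalizing constant $p_{\bar Z}(z_0,z)$ being strictly positive and independent of $l$; hence the maximizer over $l$ is unchanged if we work with the product $p_{\bar Z|L}(z_0,z|l)\,p_L(l)$ directly, as already written in \eqref{eq:LMAP}. Since $t \mapsto -\log t$ is a strictly decreasing bijection on $(0,\infty)$ and both densities are everywhere positive, we have
\begin{equation*}
\hat L^{\text{MAP}}(z_0,z) = \arg\min_l \Big\{ -\log p_{\bar Z|L}(z_0,z|l) - \log p_L(l) \Big\}.
\end{equation*}

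Next I would simply substitute the explicit expressions given just above the proposition. From $p_L(l) = (2\pi)^{-N/2}(\det P)^{-1/2} e^{-\|P^{-1/2}(l-m)\|_2^2/2}$ we get $-\log p_L(l) = \tfrac12\|P^{-1/2}(l-m)\|_2^2 + c_1$, where $c_1$ collects the terms $\tfrac{N}{2}\log(2\pi) + \tfrac12\log\det P$, all independent of $l$. From $p_{Z_0,Z|L}(z_0,z|l) = \big(\sqrt{2\pi}\,\sigma_0 (2b)^N\big)^{-1} e^{-(z_0-l_0)^2/(2\sigma_0^2)} e^{-\|z-l\|_1/b}$ we get $-\log p_{Z_0,Z|L}(z_0,z|l) = \tfrac{(z_0-l_0)^2}{2\sigma_0^2} + \tfrac{1}{b}\|z-l\|_1 + c_2$, with $c_2 = \tfrac12\log(2\pi) + \log\sigma_0 + N\log(2b)$ again independent of $l$. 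Dropping the additive constants $c_1,c_2$ (which do not affect the $\arg\min$) and recalling the notation $l_0 := \mathbf{1}^T l$ yields exactly the claimed objective
\begin{equation*}
\frac{(z_0-\mathbf{1}^T l)^2}{2\sigma_0^2} + \frac12\|P^{-1/2}(l-m)\|_2^2 + \frac{1}{b}\|z-l\|_1 .
\end{equation*}

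Finally I would remark that each of the three summands is convex in $l$ --- the first is a composition of a convex quadratic with an affine map, the second is a positive semidefinite quadratic form (well defined since $P\succ 0$), and the third is a norm of an affine function --- so the minimization is a convex program, which justifies the earlier claim that the MAP estimate can be computed online efficiently. There is essentially no hard step here; the only points that require a moment's care are that the normalizing constant and the density prefactors are genuinely $l$-independent (so they may be discarded) and that all densities are strictly positive on $\mathbb{R}^N$, so the logarithm is legitimate and no support restrictions intervene.
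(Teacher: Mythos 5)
Your proposal is correct and follows exactly the route the paper intends: the paper's proof is the one-line "straightforward insertion of the PDFs into the MAP criterion," and your negative-log-likelihood expansion with the $l$-independent constants discarded is precisely that computation spelled out. The added convexity observation matches the remark the paper makes immediately after the proposition.
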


\vspace{0.3cm}
\begin{proof}
Follows by a straightforward insertion of the PDFs into \eqref{eq:LMAP}.
\end{proof}

The optimization problem in Proposition~\ref{prop:MAP} is convex since the objective is a sum of $1$-norm and $2$-norm expressions. Hence, it can easily be solved using standard optimization software. Although this result is useful
for an implementation of the state estimator, we are not aware of an accuracy indicator similar to $Q_j^0$. Because of this, we next turn to an optimal linear estimator, where we can characterize the accuracy in general.

\begin{rem}
Recently \cite{Carrillo+15} investigated dynamical state estimation problems
where the noise was a mix of Gaussian and Laplacian random
variables. Although the motivation for that work was not related to privacy, it seems the results obtained there
have implications for privacy as well.
\end{rem}

\subsection{LMMSE State Estimation with Smart Meters}
\label{sec:LMMSE}
For arbitrarily distributed measurements, it turns out that if we limit ourselves to only \emph{linear} estimators when minimizing the mean square error (the LMMSE), we obtain simple closed-form expressions. We will use the following
result.
\begin{prop}[Theorem~2.1 \cite{andersonmoore}]
\label{prop:andmoore}
Let the random variable $(X,Z)$ have mean $(m_x,m_z)$ and covariance $\bigl[ \begin{smallmatrix}\Sigma_{xx} &
\Sigma_{xz} \\ \Sigma_{yz} & \Sigma_{zz} \end{smallmatrix} \bigr]$. Then the LMMSE estimate is given by
\begin{equation*}
\Estar{X|Z} := m_x + \Sigma_{xz}\Sigma_{zz}^{-1}(Z-m_z),
\end{equation*}
with estimation error covariance $\Sigma_{xx}-\Sigma_{xz}\Sigma_{zz}^{-1}\Sigma_{zx}$.
\end{prop}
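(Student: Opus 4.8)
The plan is to prove the result directly by minimizing the mean square error over the class of affine estimators. Let $\hat X = AZ + b$ with $A$ a matrix and $b$ a vector of compatible dimensions, and consider $\E{(X-\hat X)^T(X-\hat X)} = \mathrm{tr}\,\E{(X-\hat X)(X-\hat X)^T}$. Writing $X - \hat X = (X-m_x) - A(Z-m_z) - (Am_z + b - m_x)$ and using that $X-m_x$ and $Z-m_z$ are zero-mean, all cross terms involving the deterministic vector $Am_z+b-m_x$ drop out upon taking expectation, so the objective splits as $\E{\|(X-m_x) - A(Z-m_z)\|_2^2} + \|Am_z+b-m_x\|_2^2$. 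For any fixed $A$ this is minimized by the unbiased choice $b = m_x - Am_z$, reducing the problem to minimizing $f(A) := \E{\|(X-m_x) - A(Z-m_z)\|_2^2}$ over $A$.

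Next I would expand $f(A) = \mathrm{tr}(\Sigma_{xx}) - 2\,\mathrm{tr}(A\Sigma_{zx}) + \mathrm{tr}(A\Sigma_{zz}A^T)$ and, using the (implicit) invertibility of $\Sigma_{zz}$, complete the square to obtain
\begin{equation*}
f(A) = \mathrm{tr}\!\left(\Sigma_{xx} - \Sigma_{xz}\Sigma_{zz}^{-1}\Sigma_{zx}\right) + \mathrm{tr}\!\left((A - \Sigma_{xz}\Sigma_{zz}^{-1})\,\Sigma_{zz}\,(A - \Sigma_{xz}\Sigma_{zz}^{-1})^T\right).
\end{equation*}
Since $\Sigma_{zz}$ is positive (semi)definite the second term is nonnegative and vanishes precisely at $A = A^\star := \Sigma_{xz}\Sigma_{zz}^{-1}$, so the minimizing affine estimator is $\hat X = m_x + \Sigma_{xz}\Sigma_{zz}^{-1}(Z-m_z) = \Estar{X|Z}$. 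Equivalently, $A^\star$ can be identified from the orthogonality condition $\E{(X-\hat X)(Z-m_z)^T}=0$, which reads $\Sigma_{xz} - A\Sigma_{zz}=0$.

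Finally, to obtain the error covariance itself (and not merely its trace), I would substitute $A^\star$ directly: with $X - \hat X = (X-m_x) - \Sigma_{xz}\Sigma_{zz}^{-1}(Z-m_z)$, expanding the outer product and taking expectations gives $\E{(X-\hat X)(X-\hat X)^T} = \Sigma_{xx} - \Sigma_{xz}\Sigma_{zz}^{-1}\Sigma_{zx} - \Sigma_{xz}\Sigma_{zz}^{-1}\Sigma_{zx} + \Sigma_{xz}\Sigma_{zz}^{-1}\Sigma_{zz}\Sigma_{zz}^{-1}\Sigma_{zx} = \Sigma_{xx} - \Sigma_{xz}\Sigma_{zz}^{-1}\Sigma_{zx}$, as claimed. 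There is no genuine obstacle here --- this is a classical textbook fact --- and the only points worth a word of care are that the computation be carried out at the matrix/vector level rather than in the scalar case, and that $\Sigma_{zz}$ be invertible (if it were only positive semidefinite one would replace the inverse by a pseudoinverse, but this does not arise in our application, where the noises in \eqref{eq:flow0} and \eqref{eq:smartmeas} have strictly positive variances, so $\Sigma_{zz}$ is positive definite).
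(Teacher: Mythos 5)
Your proof is correct. The paper itself offers no proof of this proposition --- it is quoted verbatim as Theorem~2.1 of Anderson and Moore \cite{andersonmoore} --- so there is nothing to compare against; your argument is the standard textbook derivation (optimize $b$ to enforce unbiasedness, then complete the square in $A$, or equivalently invoke the orthogonality principle), and every step checks out. One minor remark: you minimize the \emph{trace} of the error covariance, whereas the completed square $f$ written at the covariance level shows the error covariance for arbitrary $A$ equals $\Sigma_{xx}-\Sigma_{xz}\Sigma_{zz}^{-1}\Sigma_{zx}+(A-A^\star)\Sigma_{zz}(A-A^\star)^T$, so $A^\star$ is in fact optimal in the positive-semidefinite order, not just in trace; this is immaterial for the statement but worth noting since the proposition reports the full error covariance matrix. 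Your observation that $\Sigma_{zz}$ is positive definite in the paper's application (the measurement noises in \eqref{eq:flow0} and \eqref{eq:smartmeas} have strictly positive variances) correctly disposes of the invertibility caveat.
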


In the Smart Meter Scenario, using Proposition~\ref{prop:andmoore} one easily obtains the LMMSE estimate of $L_j$ given all the measurements as
\begin{equation*}
\begin{aligned}
\hat L_j^{0:N}&:=\Estar{L_j|Z_0,Z}  \\
& = m_j + \begin{bmatrix} P_j \\ P_{:j} \end{bmatrix}^T
\begin{bmatrix} P_0 + R_0 & \mathbf{1}^TP \\ P\mathbf{1} & P + R \end{bmatrix}^{-1}
\begin{bmatrix} Z_0 - m_0 \\ Z-m \end{bmatrix},
\end{aligned}
\end{equation*}
where $P_{:j}$ is the $j$-th column of the covariance matrix $P$. The accuracy of the LMMSE estimate is
\begin{equation}
\begin{aligned}
Q^{0:N}_j & := \E{(L_j - \hat L_j^{0:N})^2} \\ & = P_{jj} - \begin{bmatrix} P_j \\ P_{:j} \end{bmatrix}^T \begin{bmatrix} P_0 + R_0 & \mathbf{1}^TP \\ P\mathbf{1} & P + R \end{bmatrix}^{-1}\begin{bmatrix} P_j \\ P_{:j} \end{bmatrix}.
\end{aligned}
\label{eq:Qlin0N}
\end{equation}
It should be clear that the accuracy is not as good as the MMSE estimate $\hat L_j^\star$, in general, since we
have only optimized over the linear estimators.

To get an even simpler closed-form expression, an LMMSE estimate based on only
a few of the measurements is constructed.
\begin{prop}
\label{prop:LMMSE}
The LMMSE estimate of $L_j$ given the measurements $Z_0,Z_j$ is
\begin{equation*}
\begin{aligned}
\hat L_j^{0,j}&:=\Estar{L_j|Z_0,Z_j}  \\
& =\hat L_j^0 + K_j \left[(Z_j-m_j)  - \frac{P_j}{R_0+P_0}(Z_0-m_0)\right],
\end{aligned}
\end{equation*}
where $\hat L_j^0$ is the Base Scenario MMSE~estimate~\eqref{eq:hatLj0}, and
\begin{equation}
K_j = \frac{(R_0+P_0)P_{jj} - P_j^2}{(R_0+P_0)(P_{jj}+R_j)-P_j^2} \in [0,1].
\label{eq:Kj}
\end{equation}
The accuracy of $\hat L_j^{0,j}$ is
\begin{equation}
Q^{0,j}_j := \E{(L_j - \hat L_j^{0,j})^2} = Q_j^0(1 - K_j)\leq Q_j^0,
\label{eq:Qlin0j}
\end{equation}
where $Q_j^0$ is the accuracy of the Base Scenario MMSE~estimate~\eqref{eq:Qj0}.
\end{prop}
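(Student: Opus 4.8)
The plan is to apply Proposition~\ref{prop:andmoore} with $X=L_j$ and the measurement vector $Z=(Z_0,Z_j)^T$, and then rewrite the resulting estimator in the recursive form stated. First I would collect the required second moments. Since $Z_0=\mathbf{1}^T L+W_0$ by \eqref{eq:flow}, $Z_j=L_j+W_j$, and $W_0,W_j$ are zero-mean and independent of $L$ and of one another, one reads off $m_z=(m_0,m_j)^T$, $\Sigma_{xx}=P_{jj}$, $\Sigma_{xz}=(P_j,\ P_{jj})$ --- using $\mathrm{cov}(L_j,\mathbf{1}^TL)=P_j$ and $\mathrm{cov}(L_j,L_j+W_j)=P_{jj}$ --- and
\[
\Sigma_{zz}=\begin{bmatrix}P_0+R_0 & P_j\\ P_j & P_{jj}+R_j\end{bmatrix}.
\]

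Next I would invert this $2\times2$ matrix, setting $D:=\det\Sigma_{zz}=(R_0+P_0)(P_{jj}+R_j)-P_j^2$, which is exactly the denominator in \eqref{eq:Kj}. A short computation gives $\Sigma_{xz}\Sigma_{zz}^{-1}=\tfrac1D\big(P_jR_j,\ (R_0+P_0)P_{jj}-P_j^2\big)=\big(\tfrac{P_jR_j}{D},\ K_j\big)$, the second entry being precisely the numerator of $K_j$ over $D$. Substituting into Proposition~\ref{prop:andmoore} yields $\hat L_j^{0,j}=m_j+\tfrac{P_jR_j}{D}(Z_0-m_0)+K_j(Z_j-m_j)$. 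The one not-entirely-mechanical step is to notice the identity $1-K_j=\tfrac{(R_0+P_0)R_j}{D}$ (combine the two pieces of $1-K_j$ over $D$; the $\pm P_j^2$ cancel), which turns $\tfrac{P_jR_j}{D}$ into $\tfrac{P_j}{R_0+P_0}(1-K_j)$. Then the $(Z_0-m_0)$-coefficient splits as $\tfrac{P_j}{R_0+P_0}-K_j\tfrac{P_j}{R_0+P_0}$; absorbing $m_j+\tfrac{P_j}{R_0+P_0}(Z_0-m_0)$ into $\hat L_j^0$ from \eqref{eq:hatLj0} leaves exactly $K_j\big[(Z_j-m_j)-\tfrac{P_j}{R_0+P_0}(Z_0-m_0)\big]$, which is the claimed form.

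For the accuracy I would use the second half of Proposition~\ref{prop:andmoore}: $Q_j^{0,j}=P_{jj}-\Sigma_{xz}\Sigma_{zz}^{-1}\Sigma_{zx}=P_{jj}-\tfrac{P_j^2R_j}{D}-K_jP_{jj}$, and again invoke $1-K_j=(R_0+P_0)R_j/D$ to write $\tfrac{P_j^2R_j}{D}=\tfrac{P_j^2}{R_0+P_0}(1-K_j)$; collecting terms gives $Q_j^{0,j}=(1-K_j)\big(P_{jj}-\tfrac{P_j^2}{R_0+P_0}\big)=(1-K_j)Q_j^0$. Finally, $K_j\in[0,1]$, and hence $Q_j^{0,j}\le Q_j^0$, because the numerator of $K_j$ equals $(R_0+P_0)Q_j^0\ge0$ (a variance is nonnegative, cf.\ \eqref{eq:Qj0}) while the denominator $D$ exceeds it by the nonnegative term $(R_0+P_0)R_j$. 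The hard part is purely bookkeeping: propagating the $2\times2$ inverse and spotting the factorization $1-K_j=(R_0+P_0)R_j/D$ that collapses both the estimator and its error onto Base-Scenario quantities scaled by $1-K_j$.
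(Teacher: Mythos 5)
Your proposal is correct and follows essentially the same route as the paper's proof: apply Proposition~\ref{prop:andmoore} to the pair $(Z_0,Z_j)$, invert the $2\times 2$ covariance matrix, and manipulate the result into the recursive form and the error expression \eqref{eq:Qlin0j}. You simply carry out in full the algebra that the paper leaves as ``some manipulations,'' including the key identity $1-K_j=(R_0+P_0)R_j/D$ and the observation that the numerator of $K_j$ equals $(R_0+P_0)Q_j^0\ge 0$, which justifies $K_j\in[0,1]$.
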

\begin{proof}
Applying the Schur complement formula to the inverse covariance matrix in the LMMSE~estimate
\begin{equation*}
\begin{aligned}
\hat L_j^{0,j}&:=\Estar{L_j|Z_0,Z_j}  \\
& = m_j + \begin{bmatrix} P_j \\ P_{jj} \end{bmatrix}^T
\begin{bmatrix} P_0 + R_0 & P_j \\ P_j & P_{jj} + R_j \end{bmatrix}^{-1}
\begin{bmatrix} Z_0 - m_0 \\ Z_j-m_j \end{bmatrix},
\end{aligned}
\end{equation*}
yields the desired expression. After some manipulations of \eqref{eq:Qlin0N} applied to only the measurements
$Z_0$ and $Z_j$, the expression in \eqref{eq:Qlin0j} is obtained.
\end{proof}

It is seen from Proposition~\ref{prop:LMMSE} that the variable $K_j$ can both be seen as the gain one should use to fuse in the additional measurement $Z_j$ to the estimate of $L_j$, and
as a measure of the relative improvement in the estimate as compared to the Base Scenario, \ie $K_j = \frac{Q_j^0-Q^{0,j}_j}{Q_j^0}$.

The relation between the accuracies of the different estimators is summarized next.
\begin{cor}
For $j=1,2,\ldots,N$, it holds that
\begin{equation*}
Q^\star_j \leq Q_{j}^{0:N} \leq Q_j^{0,j} \leq Q_j^{0},
\end{equation*}
where $Q^\star_j := \E{(L_j - \hat L_j^\star)^2}$ and $\hat L_j^\star$ is the MMSE estimate given measurements $Z_0,Z$.
\end{cor}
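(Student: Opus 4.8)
The plan is to establish the chain by an elementary nesting argument, reading each of the four quantities as the minimal mean square error over a progressively larger class of admissible estimators. The rightmost inequality $Q_j^{0,j}\le Q_j^0$ requires nothing new: it is exactly~\eqref{eq:Qlin0j} in Proposition~\ref{prop:LMMSE}, since $Q_j^{0,j}=Q_j^0(1-K_j)$ with $K_j\in[0,1]$ by~\eqref{eq:Kj}.

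For the middle inequality $Q_j^{0:N}\le Q_j^{0,j}$, I would note that $\hat L_j^{0,j}$ from Proposition~\ref{prop:LMMSE} is an affine function of the pair $(Z_0,Z_j)$, hence also an affine function of the full measurement vector $(Z_0,Z)=(Z_0,Z_1,\ldots,Z_N)$ — one whose coefficients on $Z_k$ happen to vanish for $k\neq j$. By Proposition~\ref{prop:andmoore}, $\hat L_j^{0:N}=\Estar{L_j|Z_0,Z}$ is, by construction, the affine estimator of $L_j$ based on $(Z_0,Z)$ with the smallest mean square error. Since $\hat L_j^{0,j}$ lies in the class over which that minimization is performed, its error cannot be smaller, so $Q_j^{0:N}\le Q_j^{0,j}$.

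For the leftmost inequality $Q_j^\star\le Q_j^{0:N}$, I would invoke the standard fact (\cite[Theorem~3.1]{andersonmoore}) that the conditional mean $\hat L_j^\star=\E{L_j|Z_0,Z}$ minimizes the mean square error over \emph{all} measurable estimators built from $(Z_0,Z)$, by the orthogonal-projection characterization of conditional expectation. The LMMSE estimator $\hat L_j^{0:N}$ is one particular such estimator (an affine one), so its error is at least that of the optimum, giving $Q_j^\star\le Q_j^{0:N}$. Concatenating the three steps yields the stated chain.

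I do not expect a genuine obstacle: this is the ``minimum over a larger feasible set is no larger'' principle, applied twice, plus one already-proved inequality. The only point worth stating cleanly is that each estimator in the list is genuinely admissible in the optimization solved by the next one to its left — affine estimators of $(Z_0,Z_j)$ embed into affine estimators of $(Z_0,Z)$, which in turn embed into all estimators of $(Z_0,Z)$. A more computational alternative would be to compare the quadratic-form accuracy expressions~\eqref{eq:Qlin0N} and~\eqref{eq:Qlin0j} directly via Schur complements and positive semidefiniteness of the relevant covariance blocks, but the nesting argument is shorter and avoids matrix manipulation.
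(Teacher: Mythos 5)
Your proof is correct and matches the argument the paper intends: the corollary is stated without an explicit proof, but the surrounding text (``we have only optimized over the linear estimators'') makes clear the authors have exactly this nesting-of-estimator-classes argument in mind, combined with the already-proved identity $Q_j^{0,j}=Q_j^0(1-K_j)$. Nothing is missing.
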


The reason we are interested in $\hat L_j^{0,j}$ and $Q_j^{0,j}$ is because they are easily computed and provide
a simple non-trivial expression for the improvement given by a single load measurement $Z_j$. The point is that if a more complicated estimate is constructed, such as  $\hat L_j^\star$ or $\hat L_j^{0:N}$, their accuracy will be at least as good as indicated by $K_j$ in \eqref{eq:Qlin0j}. This will prove useful in the next section where some insights related to the trade-off between privacy and estimation quality are obtained.

\section{Trade-off: Estimation Performance vs. Privacy Loss}
\label{sec:tradeoff}
A customer in the Smart Meter Scenario has $(\epsilon+\epsilon_0,\delta_0 e^\epsilon)$-differential privacy, following Theorem~\ref{thm:compositeprivacy}.
The customer can increase his or her privacy by increasing the smart meter noise level $R_j$, but this comes at the expense of the state estimation performance.
In this section, we investigate this trade-off a bit closer, and in particular identify situations where only a minor loss of privacy can lead to a large improvement in the
state estimate.

We start by introducing the dimensionless quantities
\begin{equation*}
\eta_j := \frac{\Delta^2}{P_{jj}}, \quad \zeta_j := \frac{P_{jj}}{P_0+R_0}.
\end{equation*}
The variable $\eta_j$ normalizes the customer's variability with the total load variability at location $j$.
Hence, it is a measure of how large player the customer is in comparison with other loads at location $j$. The variable $\zeta_j$ measures how
large the total load variability at location $j$ is in comparison with the overall current variability in the distribution line. Thus if both $\eta_j$ and $\zeta_j$ are small, it means the customer is a relatively small player at location $j$, and location $j$ is overall responsible for only a small part of the variability on the line.

In the following, we for simplicity assume that the loads are uncorrelated, \ie that $P_j = P_{jj}$. We use that
the variance of the Laplacian measurement noise added to $L_j$ is
\begin{equation*}
R_j = \frac{2\Delta^2}{\epsilon^2} = \frac{2\eta_j P_{jj}}{\epsilon^2}.
\end{equation*}
We can now express the relative improvement of the state estimate as (see \eqref{eq:Kj}--\eqref{eq:Qlin0j})
\begin{equation}
\label{eq:tradeoff}
\begin{aligned}
K_j & = \frac{(R_0+P_0)P_{jj} - P_j^2}{(R_0+P_0)(P_{jj}+R_j)-P_j^2} \\
& = \frac{1}{1+\frac{R_j(R_0+P_0)}{(R_0+P_0)P_{jj} - P_{jj}^2}} \\
& = \frac{1}{1 + \frac{2\eta_j}{\epsilon^2  (1 - \zeta_j)}} \approx \frac{\epsilon^2(1 - \zeta_j)}{2\eta_j},
\end{aligned}
\end{equation}
where the last approximation holds for small $\epsilon$. The total differential
privacy for the customer is $(\epsilon+\epsilon_0,\delta_0e^\epsilon)$ and $\epsilon$ is a measure of
how much extra privacy the customer at location $j$ gives up by sharing his or her load measurement. The customer's differential
privacy can be no better than $(\epsilon_0,\delta_0)$, and using Lemma~\ref{lem:eps0} this lower bound is
\begin{equation}
\epsilon_0^2 \approx \frac{\Delta^2K^2}{R_0} = \eta_j \zeta_j K^2\left(1+\frac{P_0}{R_0} \right).
\label{eq:eps0approx}
\end{equation}
Not surprisingly, small $\eta_j$ and $\zeta_j$ assure a high level of privacy to the customer in the Base Scenario because his or her actions are barely noticeable in $Z_0$.

\begin{figure}[tb]
\centering
\includegraphics[width=0.9\hsize]{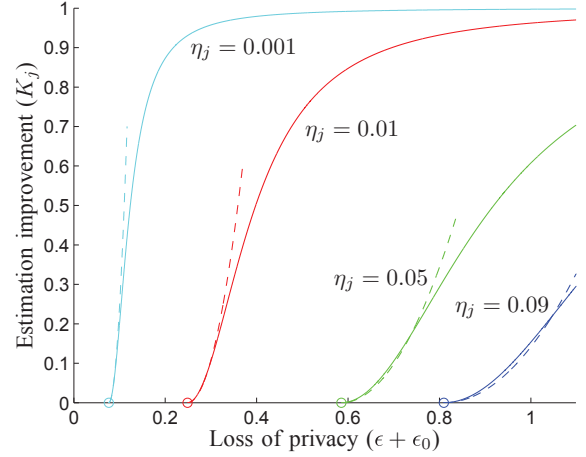}
\caption{Lower bound on the relative improvement of the state estimate in the Smart Meter Scenario ($K_j$), as a function of total loss of customer privacy, when $P_0=1$, $R_0=0.05$, $\delta_0=0.05$, $\zeta_j=0.1$, and varying customer variability $\eta_j$.
It is seen that the curvature dramatically increases for customers who have a high level of privacy in the Base Scenario (small $\epsilon_0$).}
\label{fig:etaplot}
\end{figure}

\begin{figure}[tb]
\centering
\includegraphics[width=0.9\hsize]{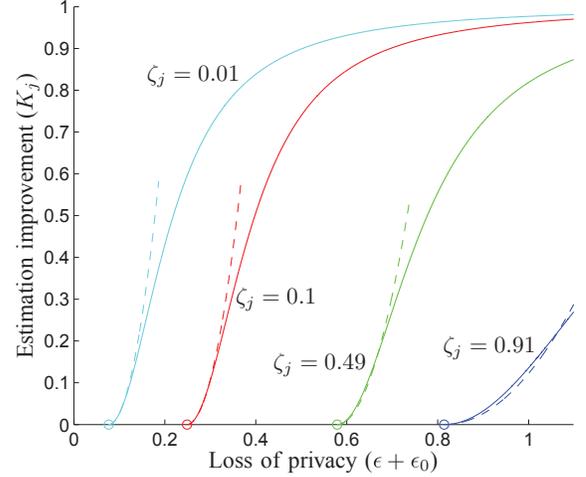}
\caption{Lower bound on the relative improvement of the state estimate in the Smart Meter Scenario ($K_j$), as a function of total loss of customer privacy, when $P_0=1$, $R_0=0.05$, $\delta_0=0.05$, $\eta_j=0.01$, and varying load variability $\zeta_j$.
It is seen that the curvature increases for customers who have a higher level of privacy in the Base Scenario
(small $\epsilon_0$).}
\label{fig:zetaplot}
\end{figure}

\addtolength{\textheight}{-8.7cm}   

In Figs.~\ref{fig:etaplot} and \ref{fig:zetaplot}, we plot the lower bound on relative improvement \eqref{eq:tradeoff} as a function of the
total loss of privacy, for fixed $\zeta_j$ and $\eta_j$, respectively. The values of $\eta_j$ and $\zeta_j$ are
chosen to give comparable values for the lower bounds $\epsilon_0$ (indicated by circles in Figs.~\ref{fig:etaplot} and \ref{fig:zetaplot}). It is interesting to see that
the improvement curves increase faster for small values of $\epsilon_0$, especially as
the customer's variability $\eta_j$ decreases. This is also clearly seen in the quadratic approximation \eqref{eq:tradeoff}, whose curvature
is $(1-\zeta_j)/(2\eta_j)$. Combining \eqref{eq:tradeoff} and \eqref{eq:eps0approx},
\begin{equation}
K_j \approx \frac{\epsilon^2}{2 \eta_j}(1-\zeta_j) \approx \frac{K^2}{2}\left(1+\frac{P_0}{R_0}\right)\zeta_j (1-\zeta_j) \left(\frac{\epsilon}{\epsilon_0}\right)^2
\end{equation}
we also directly see that a small $\epsilon_0$ increases the curvature. (The quadratic approximations are indicated by
dashed lines in Figs.~\ref{fig:etaplot} and \ref{fig:zetaplot}.) The importance of this observation is that
a customer who enjoys a high level of differential privacy in the Base Scenario can dramatically improve the
state estimation performance by only giving up a little bit extra of privacy. For example, in Fig.~\ref{fig:etaplot}, we see
that if a customer is willing to increase $\epsilon_0+\epsilon$ from $0.25$ to $0.35$ when $\eta_j=0.01$,
there is at least around $30\%$ improvement in estimation quality. On the other hand, the
smart meter measurements of a customer who already has a low level of privacy in the Base Scenario are of much lower value.
The reason is that such customers are already possible to estimate quite accurately using the measurement $Z_0$ alone.

\section{Conclusion}
\label{sec:conclusion}
We have proposed a modeling framework to analyze the differential privacy for customers equipped with smart meters in a radial distribution power grid. Several different state estimation schemes were proposed, and an interesting trade-off between
the utility for the operator and the loss of privacy for the customer was identified. The analysis revealed that
aggregated measurements from small customers can significantly improve the operator's state estimate at a small loss of privacy.

It should be noted that in the current framework it is not possible for the customer to influence the
baseline $(\epsilon_0,\delta_0)$-differential privacy. This would be possible if the customer has access to a local controllable energy storage, see \cite{Tan+13}, for example. Open problems for future research also include more detailed modeling of the distribution grid and quantification of the accuracy of the MAP~estimate.

\bibliographystyle{IEEEtran}

\end{document}